\newtheorem{theorem}{Theorem}[section]
\newtheorem{thm}{Theorem}[section]
\newtheorem{lemma}[theorem]{Lemma}
\newtheorem{conj}[theorem]{Conjecture}
\newtheorem{claim}[theorem]{Claim}
\theoremstyle{definition}
\theoremstyle{remark}
\newtheorem*{remark*}{Remark}
\newcommand{\bZ}{\ensuremath{\mathbb{Z}}}
\numberwithin{equation}{section}
\begin{document}

\title{Proof of the Brown-Erd\H{o}s-S\'os conjecture in groups}

\author{Rajko Nenadov}
\address{Rajko Nenadov, Department of Mathematics, ETH Zurich, Switzerland.}
\email{rajko.nenadov@math.ethz.ch}
\thanks{The first and the second author were supported in part by SNSF grant 200021-175573.}

\author{Benny Sudakov}
\address{Benny Sudakov, Department of Mathematics, ETH Zurich, Switzerland.}
\email{benjamin.sudakov@math.ethz.ch}

\author{Mykhaylo Tyomkyn}
\address{Mykhaylo Tyomkyn, Mathematical Institute, University of Oxford, Oxford OX2 6GG, UK}
\email{tyomkyn@maths.ox.ac.uk}
\thanks{The third author was supported in part by ERC Starting Grant 633509 and ERC Starting Grant 676632}


\date{\today}

\begin{abstract}
The conjecture of Brown, Erd\H{o}s and S\'os from 1973 states that, for any $k \ge 3$, if a $3$-uniform hypergraph $H$ with $n$ vertices does not contain a set of $k+3$ vertices spanning at least $k$ edges then it has $o(n^2)$ edges. The case $k=3$  of this conjecture is the celebrated $(6,3)$-theorem of Ruzsa and Szemer\'edi which implies Roth's theorem  on 
$3$-term arithmetic progressions in dense sets of integers. Solymosi observed that, in order to prove the conjecture, one can assume that $H$ consists of triples $(a, b, ab)$ of some finite quasigroup $\Gamma$. Since this problem remains open for all $k \geq 4$, he further proposed to study triple systems coming from finite groups. In this case he proved that the conjecture holds also for $k = 4$.  Here we completely resolve the Brown-Erd\H{o}s-S\'os conjecture for all finite groups and values of $k$. Moreover, we prove that the hypergraphs coming from groups contain sets of size $\Theta(\sqrt{k})$ which span $k$ edges. This is best possible and goes far beyond the conjecture.
\end{abstract}

\maketitle

\section{Introduction}

One of the main research directions in discrete mathematics concerns emergences of certain local sub-structures in objects of high density. Many classical results, such as Szemer\'edi's theorem on arithmetic progressions in subsets of integers of constant density or Tur\'an's theorem on the existence of complete graphs in very dense graphs, belong to this category of problems. In the study of hypergraphs, one of the most important open questions in this direction is the Brown-Erd\H{o}s-S\'os conjecture from 1973.

\begin{conj}[Brown-Erd\H{o}s-S\'os~\cite{BES}]\label{conj:BES0}
For any $c > 0$ and any integer $k$ there exists $n_0=n_0(c,k)$, such that every $3$-uniform hypergraph $H$ with $n \ge n_0$ vertices and at least $cn^2$ edges contains a subset of $k+3$ vertices which span at least $k$ edges. 
\end{conj}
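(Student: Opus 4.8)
The plan is to attack Conjecture~\ref{conj:BES0} by combining Solymosi's reduction with the removal-lemma method that settles $k=3$. First I would invoke Solymosi's observation that, to prove the conjecture for \emph{arbitrary} $3$-uniform $H$, it suffices to treat the triple system of an arbitrary finite \emph{quasigroup} $\Gamma$ of order $m$: identifying $\Gamma$ with an $m\times m$ Latin square, one forms the tripartite $3$-uniform hypergraph on parts (rows, columns, symbols) whose edges are the triples $(i,j,i\cdot j)$. This hypergraph is automatically linear---two cells cannot agree in two coordinates---and has $m^2$ edges on $3m$ vertices, so the density hypothesis holds with room to spare, and ``$k+3$ vertices spanning $k$ edges'' becomes the combinatorial task of finding $k$ filled cells that together use only $k+3$ distinct rows, columns and symbols. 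This is a genuine reduction: settling the statement for every quasigroup settles the full conjecture, whereas treating only \emph{groups} would not, since groups form a proper subclass.

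For the base case $k=3$ I would argue by contradiction: assuming no $6$ vertices span $3$ edges, the $(6,3)$-condition forces every triangle of the tripartite graph on (rows, columns, symbols)---in which each cell $(i,j,i\cdot j)$ becomes a triangle---to come from a single cell, and linearity makes these triangles pairwise edge-disjoint, so destroying all of them needs $\ge m^2$ edge deletions; but the triangle removal lemma, applicable since there are only $O(m^2)=o(m^3)$ triangles, achieves this with $o(m^2)$ deletions, a contradiction. To lift this to general $k$ I would aim for a cluster-finding step. A set of $k$ cells meeting in a loose tree spans $2k+1$ vertices, so to fit inside $k+3$ vertices the cells must overlap heavily: one needs a connected cluster of $k$ cells whose average vertex-degree tends to $3$. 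I would try to produce it by applying hypergraph regularity to the quasigroup system, passing to a dense regular cell, and greedily assembling overlapping cells there, exploiting that the equation $i\cdot j=s$ has many solutions inside a regular block.

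The hard part, and the reason the conjecture is open for every $k\ge 4$, is forcing the \emph{symbols} of the cluster to coincide, so that $k$ cells occupy only $k+3$ coordinate values rather than $\Theta(k)$. A $\sqrt k\times\sqrt k$ subgrid already packs $k$ cells into $2\sqrt k$ rows and columns, but in a general Latin square these cells may carry up to $k$ distinct symbols, giving $\sim k+2\sqrt k$ vertices---far above $k+3$. Collapsing the symbols onto an $O(\sqrt k)$-set is exactly what a coset of a subgroup accomplishes, and this is how the present paper attains the optimal $\Theta(\sqrt k)$; for an arbitrary quasigroup there is no such algebraic scaffolding. Phrased via removal, one must locate at least one of the many non-isomorphic patterns on $k+3$ vertices carrying $k$ edges, but the spread-out patterns---those forcing few symbol repetitions---are precisely the ones that subgroup structure makes plentiful and that can be wholly absent from a generic Latin square, so no single application of the hypergraph removal lemma forces a copy. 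I expect the genuine crux to be either a counting statement whose density threshold is uniform over all such patterns, or an arithmetic substitute for subgroup structure valid in every quasigroup.
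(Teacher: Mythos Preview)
The statement you were asked to prove is a \emph{conjecture}, not a theorem of the paper. The paper explicitly records that Conjecture~\ref{conj:BES0} is open for every $k\ge 4$; what the paper actually proves is Theorem~\ref{thm:main}, the special case in which the triple system comes from a finite \emph{group}. There is therefore no ``paper's own proof'' of this statement to compare against.

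Your write-up reflects this correctly: it is not a proof but an outline of an attack together with an honest diagnosis of where it breaks. The reduction to quasigroups and the $(6,3)$ argument for $k=3$ are accurate and match the discussion in the introduction. Your analysis of the obstruction for $k\ge 4$---that a $\sqrt{k}\times\sqrt{k}$ subgrid in a generic Latin square may carry up to $k$ distinct symbols, and that only coset structure forces the needed symbol collapse---is exactly the point: the paper's proof of Theorem~\ref{thm:main} succeeds precisely by passing to an abelian subgroup and then invoking the multidimensional Szemer\'edi or density Hales--Jewett theorems to locate such coset-like grids, machinery unavailable in a general quasigroup. So your proposal does not contain a genuine gap so much as it correctly identifies one in the state of the art; as a proof of Conjecture~\ref{conj:BES0} it is, inevitably, incomplete.
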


Already the simplest case $k = 3$ of this conjecture, which is usually called the $(6,3)$-problem, had many interesting consequences. In particular, in the course of proving it Ruzsa and Szemer\'edi~\cite{RSz} used Szemer\'edi's regularity lemma to obtain an auxiliary result which is now known as the \emph{triangle-removal lemma}. This lemma and its extensions have many striking application in combinatorics, number theory and theoretical computer science. For example, it implies Roth's theorem~\cite{roth1953certain} on $3$-term arithmetic progressions in dense sets of integers and its stronger \emph{corner} version by Ajtai and Szemer\'edi \cite{ajtai75} (see \cite{solymosi2003note}). A removal lemma for larger complete graphs was later obtained by Erd\H{o}s, Frankl and R\"odl \cite{erdos1986asymptotic} in the course of extending the $(6,3)$-theorem of Ruzsa and Szemer\'edi to higher uniformities.
Deriving a hypergraph removal lemma was one of the driving forces behind development of the hypergraph regularity method (see, e.g., \cite{rodl2006density}), with one of the main applications in mind being a simpler proof of Szemer\'edi's theorem~\cite{szemeredi75kterm} which generalises Roth's theorem to arithmetic progressions of arbitrary length. 

Despite a lot of research in the last 40 years, the Brown-Erd\H{o}s-S\'os conjecture remains open for all values $k \ge 4$. The best upper bound on the number of vertices which are known to span $k$ edges is $k + 2 + \lceil \log k \rceil$, obtained by S\'ark\"ozy and Selkow \cite{sarkozy2004extension}.

It is not difficult to see that we may assume $H$ is linear, that is no two edges share more than one vertex. Indeed, if a pair of vertices in $H$ is shared by $k$ edges, then this already gives $k+2$ vertices spanning at least $k$ edges. Otherwise a simple greedy argument produces a linear subgraph $H'\subset H$ of size at least $(c/k)n^2$. Furthermore, by partitioning vertices of $H'$ at random into three parts we obtain a tripartite hypergraph $H''$ with $\frac{2}{9}(c/k)n^2$ edges. These hyperedges can be seen as entries of a partial $n \times n$ Latin square. Using a result of Evans~\cite{evans1960embedding} which states that every partial $n \times n$ Latin square can be embedded into a $2n \times 2n$ Latin square, Solymosi~\cite{Sol} observed that, by the previous, the Brown-Erd\H{o}s-S\'os conjecture can be phrased in terms of quasigroups\footnote{Recall that a finite set $\Gamma$ forms a quasigroup under a binary operation if it satisfies all group axioms except associativity, or, combinatorially, if its multiplication table forms a Latin square.}.

\begin{conj}\label{conj:BES2}
	For every integer $k \ge 3$ and $c > 0$, there exists $n_0 \in \mathbb{N}$ such that if $\Gamma$ is a finite quasigroup with $|\Gamma| \ge n_0$, then for every set $S$ of triples of the form $(a, b, ab) \in \Gamma^3$ with $|S| \ge c |\Gamma|^2$ there exists a subset $T \subseteq \Gamma$ of $k + 3$ elements which spans at least $k$ triples from $S$, that is, at least $k$ triples from $S$ belong to $T^3$.
\end{conj}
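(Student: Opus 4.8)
The plan is to retain Solymosi's reduction and work directly with the set $S \subseteq \Gamma^2$ of pairs $(a,b)$ whose triple $(a,b,ab)$ belongs to the hypergraph, so that $|S| \ge c|\Gamma|^2$. Writing $n = |\Gamma|$, the target statement becomes: find $T \subseteq \Gamma$ with $|T| \le k+3$ such that at least $k$ pairs $(a,b) \in S$ satisfy $a,b,ab \in T$. The first point to absorb is that a purely probabilistic choice of $T$ is hopeless: a uniformly random $m$-subset captures in expectation only about $c\,m^3/n$ such triples, which is $o(k)$ once $n$ is large. Hence any proof must exploit the quasigroup structure to force an \emph{anomalous} concentration of triples onto few vertices, well beyond what density alone provides.

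My starting point would be the greedy argument of S\'ark\"ozy and Selkow, which already yields $k$ triples on $k + 2 + \lceil \log k \rceil$ vertices: one builds the vertex set edge by edge, and the additive $\log k$ overhead is exactly the ``branching'' incurred because each new triple may introduce a fresh product $ab$. The goal is therefore to shave this logarithmic slack by showing that $S$ contains a subconfiguration on which products are heavily reused, i.e.\ an \emph{approximately closed} set. To locate one I would pass to the multiplicative energy $E = \#\{(a,b,a',b') \in S^2 : ab = a'b'\}$. Grouping the pairs of $S$ by their product value and applying Cauchy--Schwarz gives $E \ge |S|^2/n \ge c^2 n^3$, so the pairs of $S$ are far from having distinct products. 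Using left and right division --- which exist in every quasigroup even though inverses and associativity do not --- one can reorganise these coincidences: each identity $ab = a'b'$ records the unique solutions of $a x = a'b'$ and of $y b = a'b'$, and I would feed this into a Balog--Szemer\'edi--Gowers-type refinement to extract a large $A \subseteq \Gamma$ together with a dense sub-bipartite pattern inside $A \times A$ whose products land in a set only a constant factor larger than $A$.

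The crux --- and the step I expect to be the genuine obstacle --- is converting such an approximately closed set into the desired $k+3$ vertices. In the group setting this is precisely where one wins: a small product set forces $A$ into a coset of a bounded-index subgroup, and a subgroup is literally closed, so one reads off $\Theta(|A|^2)$ triples on $|A|$ vertices and trims to a clean configuration. For a general quasigroup there is no such structure theorem: simple quasigroups have no proper subquasigroups at all, so there is nothing algebraic to descend to. My plan is to replace the algebraic closure by a combinatorial one coming from the Latin-square incidence pattern: regard $\{(a,b,ab) : (a,b)\in S\}$ as a tripartite $3$-uniform hypergraph, apply hypergraph regularity to isolate a triad cluster of density bounded away from $0$, and then run a supersaturation/density-increment iteration that concentrates $\Omega(k)$ edges onto a vertex set of size $k+O(1)$. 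The delicate, decidedly non-routine point is to control the increment so that the number of distinct vertices grows \emph{in step} with the number of triples rather than ahead of it; this is exactly the quasigroup phenomenon that must substitute for the missing subgroup, and making it quantitative is where the real difficulty lies.
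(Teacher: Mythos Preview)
The statement you are attempting to prove is Conjecture~\ref{conj:BES2}, which the paper does \emph{not} prove: it is precisely the quasigroup reformulation of the full Brown--Erd\H{o}s--S\'os conjecture, and the paper states explicitly that it remains open for all $k\ge 4$. What the paper actually establishes is Theorem~\ref{thm:main}, the special case where $\Gamma$ is a \emph{group}. There the argument is entirely different from your sketch: one passes to a large abelian subgroup (Erd\H{o}s--Straus), decomposes it via the fundamental theorem of finite abelian groups into a factor that is either a large cyclic group $\bZ_q$ or a high-dimensional $\bZ_q^m$, and then applies the multidimensional Szemer\'edi theorem in the first case and the multidimensional density Hales--Jewett theorem in the second to locate a highly structured grid on which products collapse. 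No energy, BSG, or hypergraph regularity is used.

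Your proposal, read on its own terms, is a research outline rather than a proof, and it contains a gap you yourself flag. The Balog--Szemer\'edi--Gowers step already relies on rewriting $ab=a'b'$ as $a^{-1}a'=b(b')^{-1}$ and iterating; in a quasigroup the lack of associativity and of two-sided inverses means the usual path-counting that underlies BSG does not go through, and there is no known substitute. More seriously, even granting an ``approximately closed'' set $A$ with $|A\cdot A|=O(|A|)$, the passage to $k$ triples on $k+3$ vertices is exactly the open problem: in groups this succeeds because approximate closure forces a genuine coset structure, whereas in quasigroups no such structure exists (as you note, simple quasigroups have no nontrivial subquasigroups). The final density-increment/supersaturation scheme you propose would, if it worked, resolve the full conjecture; the paper gives no indication that hypergraph regularity alone can control the vertex count to within an additive $O(1)$, and the current record of S\'ark\"ozy--Selkow stands at $k+2+\lceil\log k\rceil$ precisely because no one knows how to do this step.
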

\begin{remark*}
Without loss of generality, here and in the rest of the paper, we assume that every triple (as a set) appears in $S$ only once. Moreover for every such triple we fix some ordering $(a,b, ab)$ in which the third element is the product of the first two.
\end{remark*}

As a step towards understanding this conjecture, Solymosi \cite{Sol} suggested to consider the case where $\Gamma$ is a \emph{group}. In particular, he showed that the
Brown-Erd\H{o}s-S\'os conjecture for groups holds also when $k = 4$. Very recently, while we were completing this paper, Solymosi and Wong~\cite{SW} made a further step towards  Brown-Erd\H{o}s-S\'os conjecture in groups. They proved that for any group and every set of quadratically many triples $(a,b,ab)$  there are infinitely many values of $k$ such that there is a set of size $(3/4+o(1))k$ spanning at least $k$ triples. In their result the value of $k$ can not be chosen in advance and depends on the group and more importantly on the set of triples. 

In this paper we completely resolve Brown-Erd\H{o}s-S\'os  problem in groups for all values of $k$. Unlike Conjecture \ref{conj:BES2} which, if true, would be optimal, we show that in the case of groups there are already sets of size $O(\sqrt{k})$ spaning $k$ triples. 

\begin{theorem} \label{thm:main}
	For every integer $k \ge 3$ and $c > 0$, there exists $n_0 \in \mathbb{N}$ such that if $\Gamma$ is a finite group with $|\Gamma| \ge n_0$, then for every set $S$ of triples of the form $(a, b, ab) \in \Gamma^3$ with $|S| \ge c |\Gamma|^2$ there exists a subset of $\Gamma$ of size at most
	$$
		\min\left\{k + 3, 8 \sqrt{k}\right\}
	$$
	which spans at least $k$ triples from $S$.
\end{theorem}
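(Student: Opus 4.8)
The plan is the following. First observe that a set $T\subseteq\Gamma$ with $|T|=m$ spans precisely the triples $(a,b,ab)\in S$ with $a,b\in T$ and $ab\in T$; there are at most $m^2$ such pairs $(a,b)$, so $m\ge\sqrt k$ is forced and the only question is whether one can reach $m=O(\sqrt k)$. The second, crucial, observation is that a typical $m$-set is hopeless: only about $m^3/|\Gamma|$ of the $m^2$ products $ab$ fall back into $T$, so to have a positive proportion of products return one must take $T$ \emph{structured} — close to a coset of a subgroup, or, failing a suitable small subgroup, a short arithmetic/coset progression. Consequently the theorem reduces to the following: locate a structured set $B$ of size $\Theta(\sqrt k)$ (one in which a positive proportion of pairs have product inside a set of comparable size) on which $S$ is dense \emph{with an absolute positive constant}, and then take $T$ to consist of a bounded number of translates of $B$ and check, by a short direct count, that it spans at least $k$ triples. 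With care in the choice of constants this scheme yields the bound $8\sqrt k$.

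It is convenient to reduce to the case of abelian $\Gamma$. Every group of order $n$ contains an abelian subgroup $A$ with $|A|\to\infty$ as $n\to\infty$ (if $n$ has a prime divisor $p\ge\log n$ take a cyclic subgroup of order $p$; otherwise some Sylow subgroup has order $\ge(\log n)^{\Omega(1)}$, and a $p$-group of order $p^a$ contains an abelian subgroup of order at least $p^{\Omega(\sqrt a)}$), so since $n_0(c,k)$ may be taken arbitrarily large we may assume $|A|$ exceeds the threshold supplied by the abelian case. Moreover, for any $x,y\in\Gamma$ the set $\{(xay^{-1},\,yb,\,xab):(a,b,ab)\in S\}$ is again a legitimate set of product-triples, and a spanning set of size $m'$ for it yields one of size at most $3m'$ for $S$; averaging over $x,y$ produces such a translate in which at least $c|A|^2$ triples lie entirely inside $A$ (the third coordinate then returns to $A$ automatically, as $A$ is a subgroup). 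Hence it suffices to treat abelian groups, at the cost of a bounded constant in front of $\sqrt k$.

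The heart of the matter — and the one genuinely hard step — is the abelian statement: given a huge abelian group $A$ and a set $S$ of at least $c|A|^2$ triples, find a structured $B$ of size $\Theta(\sqrt k)$ containing at least $k$ of them. The obstruction is that plain averaging or pigeonholing over cosets and sub-progressions can never raise the density above $c$, whereas we need a density bounded below by an absolute constant on a set of only constant size. This forces one to exploit the sheer size of $A$: run a density-increment argument of Roth/Shkredov type on $S$ viewed inside $A\times A$ — so long as $S$ is not sufficiently quasirandom it correlates with a subgroup or Bohr set on which its density strictly increases, and one iterates; the number of steps and the loss of ``rank'' per step must be controlled purely in terms of $c$, so that after boundedly many steps the density passes the required constant while the ambient structure is still enormous and contains the desired $B$. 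In the remaining quasirandom regime one argues by counting instead: a huge abelian group carries roughly $|A|^2$ candidate structured sets of size $\Theta(\sqrt k)$, and a concentration/counting estimate (again using that $|A|$ dwarfs any function of $c$ and $k$) shows that at least one of them must capture $k$ triples of $S$. An essentially equivalent route starts from the large additive energy of a dense $S$ and applies a Balog--Szemer\'edi--Gowers / Freiman-type structure theorem to pass to a Freiman-structured subset of $S$.

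The remaining pieces are routine. For small $k$ — where $8\sqrt k\le k+3$ fails and the relevant bound is $k+3$ — one uses the same localisation: having found a structured region in which $S$ is dense, build the $k$ triples one at a time along a chain, each new triple contributing at most one new vertex, so that $k$ triples occupy at most $k+3$ vertices. Finally one assembles $T$ from a bounded number of translates of $B$, counts its spanned triples, and carries the accumulated constants through to obtain $\min\{k+3,\,8\sqrt k\}$. Thus the entire difficulty is concentrated in the abelian density-increment/counting step; the reduction to abelian groups, the translate bookkeeping, the chaining for small $k$, and the constant-chasing are all straightforward.
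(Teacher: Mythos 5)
Your overall plan --- reduce to a large abelian subgroup, then find a structured set of size $\Theta(\sqrt k)$ (an AP or a coset) on which the triples are very dense, and read off $A$, $B$, $P=A+B$ --- matches the architecture of the paper's proof, and your reduction to the abelian case is workable in principle (the paper uses the Erd\H{o}s--Straus bound together with a two-sided coset averaging that keeps the three coordinate classes separate and so avoids your factor-of-$3$ loss). But the step you yourself call ``the one genuinely hard step'' is exactly where the proposal has a genuine gap: you never actually produce the structured set. The density-increment route is only gestured at, and it is not clear it can work: what you need is not density ``bounded below by an absolute constant'' but essentially density $1$ on a grid $A\times B$ where $A,B$ are APs with a common difference (or cosets of a common subgroup) of length $\Theta(\sqrt k)$ --- otherwise you cannot simultaneously guarantee that at least $k$ of the pairs lie in $S$ \emph{and} that their products fall into a set of size $O(\sqrt k)$. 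Roth/Shkredov-type increments raise the density by an amount depending on $c$ and localise to long sub-progressions; they do not terminate at a prescribed density on a prescribed $\Theta(\sqrt k)$-sized configuration. Worse, the fallback ``quasirandom regime'' count is backwards: the average number of $S$-triples captured by a translate $s+t([h]\times[h])$ with $h=\Theta(\sqrt k)$ is about $c h^2<k$, so concentration around the mean shows a typical candidate captures \emph{fewer} than $k$ triples; extracting one exceptional, (almost) fully covered candidate from mere quasirandomness is precisely the content of the counting/removal machinery behind the multidimensional Szemer\'edi theorem, not a routine estimate.

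The missing idea is that this entire step can be outsourced to two black boxes. If the abelian subgroup has a large cyclic factor $\bZ_q$, the density Gallai--Witt (multidimensional Szemer\'edi) theorem applied to $C=\{(a,b):(a,b,a+b)\in S'\}\subseteq[q]^2$, which has density $c$, yields a full homothetic copy $s+t[k]^2\subseteq C$; taking $A=\{s_1+ti\}$ and $B=\{s_2+tj\}$ of length $\lceil\sqrt k\rceil$ gives density $1$ on $A\times B$ with $|A+B|<2\lceil\sqrt k\rceil$. If instead the subgroup is $\bZ_q^m$ with $q$ bounded and $m$ huge --- a case your sketch does not really address --- the multidimensional density Hales--Jewett theorem produces cosets $\hat a+W$, $\hat b+W$ of a $k$-dimensional subspace on which every pair gives a triple of $S'$. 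In both cases the $k+3$ bound then follows from the same complete grid by an explicit version of your chaining idea. So the theorem is genuinely a corollary of these two density theorems; without invoking them (or reproving them), the core of your argument is a restatement of the goal rather than a proof.
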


\noindent
Note that, since our hypergraphs are linear, the bound of $\Theta(\sqrt{k})$ is tight up to a constant factor. Interestingly, as $8\sqrt{k}$ does not depend on $c$ we have that triple systems coming from groups are much denser locally than globally. 

\medskip
\paragraph{\textbf{Note added in proof.}} After this paper was written we learned that Theorem \ref{thm:main} was proved independently by J. Long \cite{Long}, and a weaker result (where constant in front of $\sqrt{k}$ depends on density $c$ of triples) was obtained independently by Wong \cite{Wong}.

\section{Proof of Theorem \ref{thm:main}}

In the proof of Theorem \ref{thm:main} we utilise two classical theorems in additive combinatorics: the density version of the Gallai-Witt theorem \cite{furstenberg1978ergodic,gowers2007hypergraph,rodl2006density} (also known as the multidimensional Szemer\'edi's theorem) and the multidimensional density Hales-Jewett theorem \cite{dodos2014simple,furstenberg1991density,polymath2012new}. Let us recall them here, starting with the former. 

\begin{theorem} \label{thm:multi_Szm}
	Let $d$ be a positive integer, $R$ be a finite subset of $\mathbb{N}^d$, and $c > 0$. If $n \ge n_0(d, R, c)$ is sufficiently large, then every subset $C \subseteq [n]^d$ of size $|C| \ge c n^d$ contains a homothetic copy of $R$, that is there exist $s \in [n]^d$ and an integer $t \ge 1$ such that $s + t R \subseteq C$.
\end{theorem}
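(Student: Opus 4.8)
The plan is to obtain Theorem~\ref{thm:multi_Szm} from one of the standard deep inputs in the area: the hypergraph removal lemma (via the hypergraph regularity method), the density Hales--Jewett theorem, or Furstenberg and Katznelson's ergodic multiple-recurrence theorem. There is no elementary proof, so the only part I would actually write out is a reduction; everything hard is imported. I would take the hypergraph-removal route, since it is the most combinatorial one and directly generalises Ruzsa and Szemer\'edi's derivation of the corners theorem from the triangle removal lemma.

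\emph{The reduction.} A potential homothetic copy is described by a parameter $(s,t)\in\mathbb{Z}^{d}\times\mathbb{Z}$ and consists of the $m:=|R|$ points $p_i(s,t)=s+tr_i$; we want $(s,t)$ with all $p_i(s,t)\in C$ and $t\neq 0$ (for a pattern symmetric up to translation --- in particular the grid $R=\{0,\dots,m-1\}^2$ used later in the paper --- the sign of $t$ does not matter, and the version with $t\neq 0$ is what the argument produces). One builds an auxiliary multipartite hypergraph $\mathcal{H}$ whose classes record suitable linear ``coordinates'' of the forms $p_i$ (in the corners case, $d=2$ and $|R|=3$, these are the three families of lines of slope $0$, $\infty$ and $-1$) and whose hyperedges mark which points of $C$ occur; the governing fact is linear-algebraic: any two of the forms $p_i$ already determine $(s,t)$, so a copy of the relevant complete hypergraph inside $\mathcal{H}$ forces a globally consistent configuration and hence corresponds to a genuine homothetic copy of $R$ contained in $C$. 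The \emph{degenerate} configurations $t=0$ arise from single points of $C$, they give many pairwise edge-disjoint copies of that complete hypergraph, and so cannot all be destroyed by deleting a vanishing fraction of the hyperedges; the removal lemma therefore forces superlinearly many copies, of which all but a negligible fraction are non-degenerate. Any non-degenerate copy is the desired $s+tR\subseteq C$, and taking $n$ large keeps the translate inside $[n]^d$ rather than wrapping around.

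\emph{Alternative routes.} The same conclusion follows from the density Hales--Jewett theorem by the usual ``projection'' argument --- pulling $C$ back along a sum-of-coordinates map from a cube $[q]^N$, composed with a uniformly random translation to keep the preimage dense, so that combinatorial subspaces push forward to homothetic copies of $\{0,\dots,q-1\}^d\supseteq R$ --- or from Furstenberg--Katznelson multiple recurrence via the Furstenberg correspondence principle. I would keep these in reserve: the ergodic proof is the shortest to state but imports the most, whereas the Hales--Jewett route has the advantage that density Hales--Jewett itself admits a purely combinatorial proof.

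\emph{The main obstacle.} The essential difficulty cannot be avoided: every known proof routes through the hypergraph regularity method, the combinatorial proof of density Hales--Jewett, or ergodic structure theory, each a substantial theory of its own. What does require care in the reduction --- and is the part I expect to be fiddly --- is choosing the auxiliary hypergraph (equivalently, the system of linear forms) so that it is dense enough for the removal lemma to bite and so that copies of the complete hypergraph correspond \emph{exactly} to homothetic copies, together with the bookkeeping that separates degenerate from non-degenerate configurations and the routine passage between $[n]^d$ and $\mathbb{Z}_n^d$.
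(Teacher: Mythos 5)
The paper does not prove this statement at all: Theorem~\ref{thm:multi_Szm} is the multidimensional Szemer\'edi (density Gallai--Witt) theorem, which the authors import as a black box with citations to Furstenberg--Katznelson, Gowers, and R\"odl et al. Your proposal is therefore not in competition with any argument in the paper; it is a citation-plus-sketch, and the hypergraph-removal route you sketch is exactly the one taken in two of the three cited references, so at that level it is the ``same'' approach. Two points of imprecision in the sketch are worth flagging. First, the governing linear-algebraic fact is misstated: in the corners case the values of two of the three linear forms do \emph{not} determine $(s,t)$ (two equations, three unknowns); what is true, and what makes the construction work, is that each $(m-1)$-subset of the $m$ forms determines one of the $m$ points $p_i(s,t)$ (hence the edge condition ``that point lies in $C$'' is well defined), that all $m$ forms together determine $(s,t)$, and that any two distinct points of the configuration determine $(s,t)$ (which gives edge-disjointness of the degenerate cliques). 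Moreover, for a general $m$-point pattern $R\subseteq\mathbb{Z}^d$ one cannot simply pick such forms on $[n]^d$; the standard proofs first establish the ``simplex'' case $\{0,e_1,\ldots,e_{m-1}\}$ in $\mathbb{Z}_N^{m-1}$ by the removal lemma and then transfer to general $R$ via the linear map $(x_1,\ldots,x_{m-1})\mapsto\sum x_i r_i$ and a random translate --- the projection trick you mention only for the Hales--Jewett route is in fact also needed here. Second, to get $t\ge 1$ (rather than $t\neq 0$) for an asymmetric $R$ one should apply the argument to the symmetrized pattern $R\cup(M-R)$; your remark that the sign is harmless for symmetric patterns is correct but does not cover the theorem as stated. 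Both repairs are standard, and since you explicitly defer all the genuinely hard content to the removal lemma, the proposal is an acceptable account of how this imported theorem is proved.
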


Let $m, d$ and $z$ be integers, with $d \le m$. A \emph{$d$-dimensional combinatorial subspace} of a cube $[z]^m$ is defined as follows: partition the ground set $[m]$ into
$z + d$ sets $X_1, \ldots, X_z, W_1, \ldots, W_d$ such that $W_1, \ldots, W_d$ are non-empty; the subspace consists of all sequences $x = (x_1, \ldots, x_m) \in [z]^m$ such that $x_i = j$ whenever $i \in X_j$ and $x$ is constant on each set $W_j$, that is if $i, i' \in W_j$ then $x_i = x_{i'}$. There is an obvious isomorphism between $[z]^d$ and any $d$-dimensional combinatorial subspace: the sequence $a = (a_1, \ldots , a_d)$ is sent to the sequence $x$ such that $x_i = j$ whenever $i \in X_j$ and $x_i = a_j$ whenever $i \in W_j$. With this notion at hand, we are ready to state the multidimensional density Hales-Jewett theorem.

\begin{theorem} \label{thm:HJ}
	For every $c > 0$ and every pair of integers $z$ and $d$ there exists a positive integer $MDHJ(z, d, c)$ such that, for $m \ge MDHJ(z, d, c$), every subset $C \subseteq [z]^m$ of size $|C| \ge c z^m$ contains a $d$-dimensional combinatorial subspace of $[z]^m$.
\end{theorem}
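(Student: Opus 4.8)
The plan is to deduce the multidimensional statement from the one-dimensional density Hales--Jewett theorem, i.e. the case $d=1$ of finding a single \emph{combinatorial line}, by enlarging the alphabet. Write $DHJ(k,c)$ for the least $n$ such that every subset of $[k]^n$ of density at least $c$ (in the uniform measure) contains a combinatorial line. Given $z$, $d$ and $c$, set $M := DHJ(z^d, c)$ and take $m \ge d(M+1)$. After fixing the last $m - dM$ coordinates of $[z]^m$ to values on which, by averaging, $C$ retains density at least $c$, we may assume $m = dM$. Group the coordinates into $M$ consecutive blocks of size $d$; under the resulting bijection $[z]^m \cong ([z]^d)^M$ together with a fixed identification $[z]^d \cong [z^d]$, the uniform measure is preserved, so $C$ becomes a density-$c$ subset of $[z^d]^M$. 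Since $M = DHJ(z^d,c)$, it contains a combinatorial line $\ell$ with nonempty wildcard set $W \subseteq [M]$. This $\ell$ pulls back to a $d$-dimensional subspace of $[z]^m$: as the line parameter ranges over $[z^d] \cong [z]^d = \{(a_1,\dots,a_d)\}$, each block in $W$ takes the tuple $(a_1,\dots,a_d)$, so the $j$-th coordinate of every block in $W$ receives the free parameter $a_j$. Letting $W_j$ be the set of these $j$-th coordinates (each nonempty since $W \neq \emptyset$) and collecting the coordinates of the fixed blocks according to their fixed value yields a partition $[m] = X_1 \cup \dots \cup X_z \cup W_1 \cup \dots \cup W_d$ defining a $d$-dimensional combinatorial subspace. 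Hence $MDHJ(z,d,c) \le d\,(DHJ(z^d,c)+1)$, and everything reduces to the one-dimensional case.

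For the one-dimensional theorem I would run the density-increment strategy, proving $DHJ(k,c) < \infty$ by induction on the alphabet size $k$. The base cases $k=1$ (trivial) and $k=2$ (a line in $\{1,2\}^n$ is a strictly comparable pair, found by a Sperner-type counting argument) are easy. It is technically essential to work not with the uniform measure but with the \emph{equal-slices} measure $\nu$ on $[k]^n$ --- sample a uniformly random ordering of the $n$ coordinates among the $k$ symbols via a random composition of $n$ into $k$ parts --- because $\nu$ is invariant under permuting symbols and, crucially, restricts well to combinatorial subspaces, which is what makes an iterated density increment legitimate. A standard preliminary step transfers the hypothesis from uniform to equal-slices density with only a bounded loss.

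The heart of the inductive step is a dichotomy: given a line-free set $A \subseteq [k]^n$ of equal-slices density $\delta$, one shows that on a large combinatorial subspace $A$ must correlate with a set that is \emph{insensitive} to interchanging two fixed symbols (say $1 \leftrightarrow 2$), and such insensitive sets have low ``complexity.'' This is where the inductive hypothesis $DHJ(k-1,\cdot)$ enters: passing to the sub-cube on the symbols $\{2,\dots,k\}$ and using $DHJ(k-1)$ to locate structured subspaces, the absence of a line forces the claimed correlation. Correlation with an insensitive set then yields, by an averaging/$L^2$ argument, a combinatorial subspace on which the equal-slices density of $A$ exceeds $\delta$ by a fixed amount $\eta = \eta(\delta,k) > 0$. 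As the density cannot exceed $1$, the increment can be iterated only a bounded number of times, after which $A$ is forced to contain a line --- a contradiction --- giving the desired finite bound on $DHJ(k,c)$.

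The main obstacle is precisely this inductive step, and within it two points demand real care: (i) controlling the discrepancy between the uniform and equal-slices measures and verifying that the \emph{equal-slices} density genuinely increases under restriction to the subspace, so the iteration is sound; and (ii) making the ``line-free $\Rightarrow$ insensitive correlation'' implication quantitative, with the increment $\eta$ bounded below independently of $n$. An alternative, equally valid route bypasses the density increment: via the Furstenberg correspondence the statement becomes an IP multiple-recurrence theorem for a commuting family of measure-preserving maps, provable by a structure theorem splitting the system into compact and weakly mixing parts, where the analogous obstacle is the van der Corput / weak-mixing step along IP-sets. Either way, the one-dimensional density Hales--Jewett theorem is the genuine difficulty, the multidimensional upgrade being the routine reduction above.
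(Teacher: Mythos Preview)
The paper does not prove Theorem~\ref{thm:HJ}; it is quoted as a known result with references to \cite{dodos2014simple,furstenberg1991density,polymath2012new} and then used as a black box in the proof of Theorem~\ref{thm:main}. So there is no ``paper's own proof'' to compare against.

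That said, your proposal is a correct outline of one standard route. The reduction of the $d$-dimensional statement to the $1$-dimensional one by blocking coordinates into groups of size $d$ and identifying $[z]^d$ with a single alphabet $[z^d]$ is exactly right: a combinatorial line over the enlarged alphabet with wildcard set $W\subseteq[M]$ unpacks to the partition $W_1,\ldots,W_d$ (the $j$-th coordinates of the wildcard blocks) together with the $X_v$'s (collecting the fixed coordinates by value), and the $z^d$ points of the line correspond bijectively to the $z^d$ points of the resulting $d$-dimensional subspace. Your sketch of the $k=1$ case via the Polymath density-increment argument (equal-slices measure, correlation with an insensitive set using $DHJ(k-1)$, density increment on a subspace) is accurate at the level of a proof plan, and you correctly identify the genuine technical obstacles. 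The alternative Furstenberg--Katznelson ergodic route you mention is precisely the content of \cite{furstenberg1991density}. In short: your write-up goes well beyond what the paper does here, and is a sound summary of how one would actually prove the theorem if asked to.
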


We now prove Theorem \ref{thm:main}. 

\begin{proof}[Proof of Theorem \ref{thm:main}]
	Let $\Gamma$ be a finite group with $|\Gamma| > n_0$, for some sufficiently large $n_0$. Let $G \subseteq \Gamma$ be an arbitrary subgroup of $\Gamma$ . Recall that the sets of both left and right cosets of $G$ partition the elements of the group $\Gamma$. Therefore direct product of such cosets $\ell G \times G r$ partitions $\Gamma \times \Gamma$ into sets of size $|G|^2$. 
	Thus, by averaging there exist $\ell, r \in \Gamma$ such that the set
	$$
	S_{\ell, r} = S \cap \{ (a, b, a b) \colon a \in \ell G, \; b \in G r \}
	$$
	is of size at least $|S_{\ell, r}| \ge c |G|^2$.	
	Let
	$$
	S' = \{ (a, b, a b) \colon a, b \in G\; \text{ such that }\; (\ell a, b r, \ell a  b  r) \in S\},
	$$
	and note that $|S'| = |S_{\ell, r}|$. Crucially, for any sets $A, B \subseteq G$ and $P \subseteq A B$ such that 
	\begin{equation} \label{eq:ABP}
	|S' \cap A \times B \times P| \ge k,
	\end{equation}
	we have
	$$
	|S \cap \ell A \times B r \times \ell  P  r| \ge k.
	$$
	Therefore, to prove the theorem it suffices to find such sets $A, B$ and $P$ in $G$ with $|A| + |B| + |P| \le \min\{k + 3, 8 \sqrt{k}\}$.

	By an observation of Erd\H{o}s and Straus~\cite{erdos1976abelian}, $\Gamma$ contains an abelian subgroup $\Gamma'$ with $|\Gamma'| \ge 0.9 \log |\Gamma|$. A better (and tight) estimate on the size of a largest abelian subgroup was obtained by Pyber~\cite{Py}, however this results relies on the classification of finite simple groups and for our purposes a much more elementary result of Erd\H{o}s and Straus suffices. In fact, any estimate which allows us to assume that $\Gamma'$ is sufficiently large, provided $\Gamma$ is large, would do as well.
	
	Let $K$ and $m$ be sufficiently large constants ($m$ will depend on $K$) which we choose later. From the fundamental theorem of finite abelian groups we have that $\Gamma'$ is isomorphic to a direct sum of the form
	$$
		\bigoplus_{i = 1}^h \bZ_{q_i}^{m_i},
	$$ 
	where all $q_i$'s are distinct. Therefore, by choosing $n_0$ to be large enough we can assume that there exists some $i \in [h]$ such that either $q_i>K$ or  $m_i>m$.
	For brevity let us call $q_i=q$ and $m_i=m$. By the above discussion, in the first case we can reduce problem to $\bZ_q$ and in the second to $\bZ_q^m$. In both cases, for the rest of the proof we switch to additive notation.

	\medskip
	\paragraph{\textbf{Case 1: $q \ge K$}} 
	Let $C \subseteq [q]^2$ be a subset consisting of all $(a,b) \in \bZ_q^2$ such that $(a, b, a+b) \in S'$, and note that $|C| \ge c q^2$. Choose $K$ sufficiently large, so that we can apply Theorem \ref{thm:multi_Szm} to find $s = (s_1, s_2) \in \bZ^2$ and some  positive integer $t$ such that $s + tR \subset C$, where $R = [k]^2$.

	Let $A = \{s_1 + t i \colon i \in [h]\}$ and $B = \{s_2 + t j \colon j \in [h]\}$, for $h = \lceil \sqrt{k} \rceil$. Note that for every $a \in A$ and $b \in B$ we have $(a, b) \in C$ and thus $(a, b, a+ b) \in S'$. Therefore, by the choice of $h$, the sets $A, B$ and $P = A + B$ satisfy \eqref{eq:ABP}. 
	As $A+B \subset \{s_1+s_2+t\ell: \ell \in [2,2h]\}$, we have $|A + B| < 2h$. Thus $|A| + |B| + |P| < 4 h \le 8 \sqrt{k}$, with room to spare.

	We apply a similar approach to find sets with the sum of sizes at most $k + 3$. For that choose $A = \{s_1 + ti \colon i \in [h]\}$, this time with $h = \lceil k/2 \rceil$, and $B = \{s_2 + t, s_2 + 2t\}$. If $k$ is even set $P = A + B$, and otherwise $P = (A + B) \setminus \{s_1 + ht + s_2 + 2t\}$. A routine check shows that in both cases the obtained sets satisfy \eqref{eq:ABP} and $|A| + |B| + |P| = k + 3$.

	\medskip
	\paragraph{\textbf{Case 2: $q < K$}.} 
	Choose $m$ to be sufficiently large so that we can apply density Hales-Jewett theorem (Theorem~\ref{thm:HJ}) with $c$, $d=k$ and $z=K^2$.
	Our aim is to show there exists a $k$-dimensional vector space $W \subseteq Z_q^m$ and $\hat a, \hat b \in Z_q^m$ such that for every $a \in \hat a + W$ and $b \in \hat b + W$ we have $(a, b, a+b) \in S'$. Before we prove that such $W$ and $\hat a, \hat b$ exist, let us first show how it implies the existence of desired sets $A, B$ and $P \subseteq A + B$.

	Let $u_1, \ldots, u_{k}$ be an arbitrary basis of $W$. Let $d \in \mathbb{N}_0$ be the largest integer such that $q^{2d} \le k$, and then let $t \in \mathbb{N}$ be the smallest integer such that $t^2 q^{2d} \ge k$. In particular, we have $1 \le t < q$. Set 
	$$
		W' = \big\{\lambda_1 u_1 + \ldots + \lambda_{d+1} u_{d+1} \colon \lambda_1, \ldots, \lambda_d \in [0, q-1], \lambda_{d+1} \in [0, t - 1] \big\}.
	$$
	For $A = \hat a + W'$ and $B = \hat b + W'$ we have $(a, b, a + b) \in S'$ for every $a \in A$ and $b \in B$, thus the choice of $t$ and $d$ implies that the sets $A, B$ and $P = A + B$ satisfy \eqref{eq:ABP}. As $A + B$ is of size at most $2t q^d$, we have $|A| + |B| + |P| \le 4 t q^d$. If $t = 1$ then $q^{2d} = k$, thus $4 t q^d \le 4 \sqrt{k}$. 
	Otherwise, for $t \ge 2$ we have	
	$$
		(t/2)^2 q^{2d} \le (t - 1)^2 q^{2d} < k,
	$$
	which implies $4 t q^d < 8 \sqrt{k}$. In either case, we have $|A| + |B| + |P| \le 8 \sqrt{k}$, as desired.

	As in Case 1, a similar approach is also used to find a subsets with the sum of sizes $k+3$. Let $h = \lfloor k / (2q - 1) \rfloor$, $t = \lceil (k - h (2q - 1))/2 \rceil$ and set
	$$
		A = \hat a + \{ \lambda u_i \colon \lambda \in [0, q - 1] \text{ and } i \le h\} \cup \{ \lambda u_{h+1} \colon \lambda \in [0, t - 1]\}
	$$
	and 
	$$
		B = \hat b + \{0, u_1, \ldots, u_h, u_{h+1}\}.
	$$
	Note that $|A| = h(q-1) + t$ and $|B| = h + 2$. Again, for $a \in A$ and $b \in B$ we have $(a, b, a + b) \in S'$. This time we do not take $P = A + B$, which would be too large, but only a subset of it, namely
	$$
		P = \hat a + \hat b + \{ \lambda u_i \colon \lambda \in [0, q - 1] \text{ and } i \le h\} \cup \{\lambda u_{h+1} \colon \lambda \in [0, t] \}.		
	$$
	Moreover, if $k - h(2q - 1)$ is not even then 
	$P := P \setminus \{ t u_{h+1} \}$. Note that $|P|=h(q-1) + t+1$ when $k - h(2q - 1)$ is even and $|P|=h(q-1) + t$ otherwise. Thus, it is easy to verify that $|A| + |B| + |P| = k + 3$. Let us briefly check that \eqref{eq:ABP} is also satisfied. We do this only in the case $k - h(2q - 1)$ is even. The other case is done analogously. First, for every $i \in \{1, \ldots, h\}$ and every 
	$$
		a \in \hat a + \{\lambda u_i \colon \lambda \in [0, q - 1]\} \quad \text{ and } \quad b \in \{\hat b, \hat b + u_i\}
	$$
	we have $a + b \in P$. Overall this amounts to $h \cdot 2 q - (h-1)$ triples in $S'$. Similarly, for every 
	$$
		a \in \hat a + \{\lambda u_{h+1} \colon \lambda \in [0, t-1]\} \quad \text{ and } \quad b \in \{\hat b, \hat b + u_i\}
	$$
	we again have $a + b \in P$, which contributes additional $2t - 1$ triples (notice that we have already counted $\hat a + \hat b$ in the previous step). Using $2t = k - 2hq + h$ we conclude that this amounts to $k$ triples in total.
	
	It remains to show, using Theorem~\ref{thm:HJ}, that a desired $k$-dimensional vector subspace $W \subseteq Z_q^m$ and elements $\hat a, \hat b$ exist. Consider a subset $C \subseteq V^m$, where $V = \bZ_q \times \bZ_q$, which contains an element 
	$$
		((a_1,b_1), \ldots, (a_m, b_m))
	$$
	if and only if $(a, b, a + b) \in S'$ for $a = (a_1, \ldots, a_m)$ and $b = (b_1, \ldots, b_m)$. As $|C| \ge c (q^m)^2 = c |V|^m$ and $m$ is sufficiently large, by Theorem \ref{thm:HJ} the set $C$ contains a $k$-dimensional combinatorial subspace. Let $\{X_{(e_1, e_2)}\}_{(e_1, e_2) \in V}$ and $W_1, \ldots, W_{k}$ be the partition of $[m]$ corresponding to this subspace. Define $\hat a \in \bZ_q^m$ by setting $\hat a_i = j$ for every $i \in \bigcup_{e_2 \in \bZ_q} X_{(j, e_2)}$ and, similarly, $\hat b_i = j$ for every $i \in \bigcup_{e_1 \in \bZ_q} X_{(e_1, j)}$. For all $i \in W_1 \cup \ldots \cup W_{k}$ set $\hat a_i = \hat b_i = 0$. Furthermore, let $u_1, \ldots, u_k \in \bZ_q^m$ be vectors defined as $(u_i)_j = 1$ for $j \in W_i$ and $(u_i)_j = 0$ otherwise, for $i \in [k]$. It is clear that they are independent in $\bZ_q^m$ and therefore span a $k$-dimensional vector subspace, which we denote by $W$.

	Let us briefly check that the obtained $W$ and $\hat a, \hat b$ have the desired property. Consider some $a = (a_1, \ldots, a_m) \in \hat a + W$ and $b  = (b_1, \ldots, b_m) \in \hat b + W$. Then $((a_1, b_1), \ldots, (a_m, b_m))$ belongs to a $k$-dimensional combinatorial subspace of $V^m$ given by the partition $\{X_{(e_1, e_2)}\}_{(e_1, e_2) \in V}$ and $W_1, \ldots, W_{k}$. As this combinatorial subspace lies in $C$, from the definition of $C$ we conclude $(a, b, a + b) \in S'$.
\end{proof}

\section{Concluding remarks}

Theorem \ref{thm:main} shows that triples coming from groups contain much denser subsets than conjectured. 
Determining the best possible constant $C$ in the $C\sqrt{k}$-term of Theorem~\ref{thm:main} remains an interesting problem. We were able to do it for cyclic groups $\bZ_n$, where we obtain $C=\sqrt{12}$. We believe that the proof, which is presented in the Appendix, is interesting in its own right as it establishes a correspondence between the Brown-Erd\H{o}s-S\'os conjecture for $\bZ_n$ and the following discrete isoperimetric problem. Recall that the \emph{edge-boundary} of a vertex set $S$ in a graph is defined as $\partial_e(S):=e(S,\overline{S})$, that is the number of edges leaving $S$. The edge-isoperimetric problem for a graph $G$ (that may be infinite) and an integer $k$ asks to find the minimum edge-boundary of vertex sets of size $k$ in $G$. 

Here we are particularly concerned with $G$ being the two-dimensional triangular lattice $T$, where the above question was answered by Harper~\cite[Theorem 7.2]{harper_2004}.
\begin{thm}[\cite{harper_2004}]\label{thm:harper}
There exists a nested family of vertex sets in $T$, $\mathcal{S}= (S_k)_{k\in \mathbb{N}}$, with $|S_k|=k$, such that each $S_k$ minimizes the edge-boundary over all sets of size $k$. The family $\mathcal{S}$ contains all balls in the lattice metric of $T$, i.e. regular hexagons.
\end{thm}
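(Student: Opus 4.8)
To prove Theorem~\ref{thm:harper} I would combine the method of compressions with an explicit description of the extremal family. Since $\partial_e(S)$ depends only on the finitely many edges incident to $S$ and is a non-negative integer at most $6k$, for each $k$ the minimum of $\partial_e$ over $k$-sets is attained; so it suffices to exhibit one optimal finite set of each size and to verify that these can be chosen nested. Fix the three edge-directions $1,2,3$ of $T$ and the induced partitions of $E(T)$ into three families of bi-infinite ``lines'' $\mathcal{L}_1,\mathcal{L}_2,\mathcal{L}_3$, each edge lying on a unique line. Call $S$ \emph{$i$-compressed} if $S\cap\ell$ is an interval of $\ell$ for every $\ell\in\mathcal{L}_i$; when $S$ is $i$-compressed, the number of its direction-$i$ boundary edges equals $2p_i(S)$, where $p_i(S)$ is the number of lines of $\mathcal{L}_i$ meeting $S$, and this is the least possible for that line-profile. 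The plan is to transform any $k$-set, without increasing $\partial_e$, into a set that is simultaneously $1$-, $2$- and $3$-compressed, and then to classify such sets.

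Carrying out those transformations is the main step, and I expect it to be the real obstacle. Replacing each $S\cap\ell$ ($\ell\in\mathcal{L}_1$) by an equally long, consistently anchored interval does not increase the number of direction-$1$ boundary edges, but in the triangular lattice such a ``horizontal push'' need not respect the diagonals and can \emph{increase} the boundary in the other two directions; this is exactly why edge-isoperimetry here is harder than in $\mathbb{Z}^2$. I would get around it in the spirit of Harper's argument: instead of arbitrary one-directional compressions, use atomic moves that relocate a single boundary vertex of $S$ to a single empty site that is earlier in a fixed total order on $V(T)$ adapted to the hexagonal geometry — a ``spiral'' order centred at a vertex $o$ that lists each lattice sphere before the next — and show, by a finite case analysis of the local neighbourhood, that such a move never increases $\partial_e$ unless $S$ is already an initial segment of the order. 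Each move strictly lowers the rank of $S$ in this order (equivalently $\sum_{v\in S}\mathrm{dist}(v,o)^2$), so the process terminates; its only fixed points are the initial segments $S_1\subset S_2\subset\cdots$, which form a nested family, and the choice of order ensures that every hexagonal ball $B_n=\{v:\mathrm{dist}(v,o)\le n\}$ occurs as the segment of length $|B_n|=3n^2+3n+1$.

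It then remains to pin down optimality: having shown that some initial segment of size $k$ is optimal and that all initial segments are nested, one checks that $\partial_e$ of a segment depends only on its length (the admissible fillings of the current shell around $B_n$ all give the same boundary), which is a short direct computation; this delivers the nested optimal family $\mathcal{S}$ with the stated hexagon property. As a sanity check, and in the form used for the $\mathbb{Z}_n$ application, the same set-up yields the asymptotics: combining $\partial_e(S)\ge 2\big(p_1(S)+p_2(S)+p_3(S)\big)$ with the elementary bound $|S|\le p_1p_2-\tfrac14\max(0,\,p_1+p_2-p_3)^2$ and its two symmetric analogues — whose joint maximum is a hexagon-shaped region — gives $p_1+p_2+p_3\ge\sqrt{12\,|S|}\,\big(1-o(1)\big)$, which is the origin of the constant $C=\sqrt{12}$.
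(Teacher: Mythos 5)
There is a genuine gap at the heart of the proposal. The step ``show, by a finite case analysis of the local neighbourhood, that such a move never increases $\partial_e$ unless $S$ is already an initial segment of the order'' is not a lemma you can expect to verify locally --- it is essentially the whole theorem. An atomic move relocates a vertex from one place to a far-away earlier empty site, so the change in $\partial_e$ is the difference between the number of $S$-neighbours lost at the source and gained at the target; this is a global quantity, and for a generic boundary vertex and a generic earlier empty site the move \emph{does} increase the boundary (move a vertex with three neighbours in $S$ to an isolated earlier cell). So at best you must prove that, for every non-initial-segment $S$, \emph{some} boundary-non-increasing move towards the spiral order exists, and nothing in the proposal indicates why. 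Your monovariant $\sum_{v\in S}\mathrm{dist}(v,o)^2$ only guarantees termination, not that the boundary can be kept from rising along the way. A further warning sign: as the paper notes in its concluding remarks, for most $k$ the extremal sets are not unique even up to isometry, so the fixed points of any correct local-move process cannot be exactly the initial segments of a single spiral order; a case analysis whose only surviving configurations are those initial segments would therefore have to be wrong somewhere. Your first instinct (directional compressions) correctly identifies why the problem is hard, but the proposed replacement does not overcome that difficulty, it restates it.

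The asymptotic paragraph has a smaller but real gap of the same kind: the ``elementary bound'' $|S|\le p_1p_2-\tfrac14\max(0,p_1+p_2-p_3)^2$ is clear only when the occupied rows and columns form intervals; for scattered row/column sets the diagonals through $[A]\times[B]$ can be long even when $p_1+p_2-p_3$ is large, and reducing to the interval case is precisely the content of the paper's Lemma~\ref{lem:equidist} ($h(A,B,\ell)\le h([a],[b],\ell)$), proved there by a nontrivial induction on $\ell$. Indeed, the paper's route avoids your compression obstacle entirely: it bounds $\partial_e(P)\ge 2g(P)$, where $g$ counts the occupied lines in the three edge directions (each occupied line contributes at least two boundary edges), solves the extremal problem for $g$ via the dual quantity $h(m)$ using Lemma~\ref{lem:equidist} and Claim~\ref{cl:makefatter}, and then observes that the extremal unions of consecutive rows, columns and diagonals are gap-free, hence attain equality in $\partial_e(P)\ge 2g(P)$ and include the hexagonal balls. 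If you want to salvage your approach, the cleanest fix is to drop the atomic-move machinery and prove the line-counting inequality together with the interval reduction instead.
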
  
The following figure visualizes $\mathcal{S}$: the set $S_k$ consists of all vertices labeled $1$ through $k$.

\medskip
\begin{center}
\includegraphics{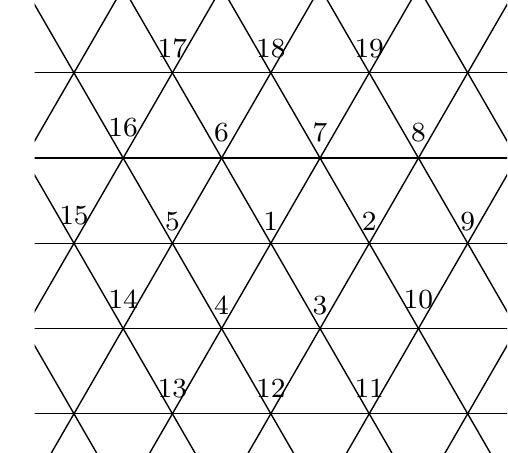}  
\end{center}

\medskip

The proof in~\cite{harper_2004} is not elementary, as it makes use of some powerful abstract tools that can be applied to various other isoperimetric problems. It might be therefore of independent interest that as a by-product of determining the correct constant in Theorem~\ref{thm:main} for $\bZ_n$, we, somewhat unexpectedly, obtain a short elementary proof of Harper's theorem. Recently Angel, Benjamini and Horesh~\cite[Theorem 2.4]{ABH} proved an edge-isoperimetric inequality for planar triangulations which generalises Harper's theorem. Our result can be viewed as an extension of Theorem~\ref{thm:harper} in a different direction, as we determine the minimum number of axis-parallel lines occupied by any set of $k$ points in $T$.

Finally, it would be interesting to determine the correct constant also in the $\mathbb{Z}_q^m$-case as this would in turn yield the optimal constant in Theorem~\ref{thm:main}.

\section*{Acknowledgement}
We would like to thank Asaf Shapira for arranging the third author's visit to ETH Zurich where the present research was conducted.

\bibliographystyle{abbrv}
\bibliography{references}

\section*{Appendix}

Here we shall determine the sharp constant in Theorem~\ref{thm:main} for cyclic groups. For a set of points $P \subseteq \mathbb{Z}^2$ define 
$$g(P):=|\{x\colon(x,y)\in P\}|+|\{y\colon(x,y)\in P\}|+|\{z \colon(x,y)\in P, x+y=z\}|.
$$ 
That is, $g(P)$ measures the total number of rows, columns and lines of form $x+y=c$ (`diagonals' for short -- note that we completely ignore the diagonals of the type $x=y+c$) occupied by points in $P$.  Let
$$
	g(k)=\min_{|P|=k}g(P).
$$ 
The following lemma shows that $g(k)$ precisely determines the size of a smallest subset which is guaranteed to span at least $k$ edges, in the case where $\Gamma = \bZ_n$.
\begin{lemma} \label{lemma:Zn}
	For every integer $k \ge 3$ and $c > 0$, there exists $n_0 \in \mathbb{N}$ such that if $n \ge n_0$ then for every set $S$ of triples of the form $(a, b, a + b) \in \bZ_n^3$ with $|S| \ge c n^2$, there exists a subset of $\bZ_n$ of size $g(k)$ which spans at least $k$ triples from $S$. Moreover, there exists a set $S$ of $n^2 / 64$ triples in which every subset of size $g(k)-1$ spans less than $k$ edges.
\end{lemma}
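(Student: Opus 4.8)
The lemma has two halves: an upper bound (some set of size $g(k)$ spans $\ge k$ triples) and a lower bound (a construction with $n^2/64$ triples in which no set of size $g(k)-1$ spans $k$ triples). For the upper bound I would run exactly the argument of Case 1 in the proof of Theorem~\ref{thm:main}, but instead of embedding the crude box $R=[k]^2$ I would embed a homothetic copy of the \emph{optimal} configuration $P^\ast$ witnessing $g(k)$. Concretely: let $C\subseteq[n]^2$ be the set of $(a,b)$ with $(a,b,a+b)\in S'$, so $|C|\ge cn^2$; by Theorem~\ref{thm:multi_Szm} applied with $R=P^\ast$ (viewed as a finite subset of $\mathbb{N}^2$ after translation) there are $s\in\mathbb{Z}^2$ and $t\ge1$ with $s+tP^\ast\subseteq C$. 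Take $A=\{s_1+ti\colon (i,j)\in P^\ast\text{ for some }j\}$, $B=\{s_2+tj\colon (i,j)\in P^\ast\text{ for some }i\}$ and $P=\{s_1+s_2+t(i+j)\colon (i,j)\in P^\ast\}$. Then $|A|+|B|+|P|=g(P^\ast)=g(k)$, every pair in $s+tP^\ast$ lies in $C$ hence gives a triple of $S'$ lying in $A\times B\times P$, so \eqref{eq:ABP} holds with $\ge k$ triples, and pulling back through $\ell,r$ finishes the upper bound. (One should note that the homothety is injective for $t\ge1$ because coordinates are genuine integers, so the row/column/diagonal counts are preserved exactly — this is the only place scaling by $t$ could in principle merge lines, and it does not.)

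**The lower bound.** Here I need a dense triple system in which no small set is efficient. The natural candidate is a random-like set: fix a large prime-ish $n$, and let $S$ consist of all triples $(a,b,a+b)$ with $a\in X$, $b\in Y$ for suitably chosen $X,Y\subseteq\mathbb{Z}_n$, or alternatively all $(a,b,a+b)$ with $(a\bmod 2, b\bmod 2)$ in some pattern — but the cleanest is to take $S=\{(a,b,a+b)\colon a,b\in I\}$ for an interval (or Sidon-type set) $I$ of size $n/8$, giving $|S|\ge n^2/64$. The point is that \emph{any} subset $T\subseteq\mathbb{Z}_n$ spanning $m$ triples from $S$ yields, by definition, a point set $P\subseteq\mathbb{Z}^2$ (the pairs $(a,b)$ with $(a,b,a+b)\in S\cap T^3$) with $|P|=m$ whose rows lie in $T$, columns lie in $T$, and diagonals $a+b$ lie in $T$; hence $g(P)\le|T|$, and if $|T|=g(k)-1$ then $m<k$ by definition of $g(k)$ — \emph{provided} the relevant arithmetic in $\mathbb{Z}_n$ faithfully mirrors arithmetic in $\mathbb{Z}$. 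That faithfulness is exactly what choosing $I$ to be a short interval buys us: if all of $a,b,a+b$ are confined to a window of length $\ll n$, there is no wraparound, so distinctness of rows/columns/diagonals in $\mathbb{Z}_n$ is the same as in $\mathbb{Z}^2$. Thus the construction reduces cleanly to the \emph{definition} of $g(k)$ once wraparound is excluded.

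**Main obstacle and loose ends.** The genuine content is the upper-bound direction, but that is essentially a verbatim reprise of Case 1, so the real work is purely bookkeeping: checking that $g(s+tP^\ast)=g(P^\ast)$ for integer $t\ge 1$ (injectivity of $x\mapsto s+tx$ on each coordinate and on the sum), and checking the coset-averaging reduction from $S$ to $S'$ goes through unchanged for $\Gamma=\mathbb{Z}_n$ (it does, with $G=\Gamma$, so one can even skip it). For the lower bound the only subtlety is that the window $I$ must be long enough to contain, within $\mathbb{Z}_n$, \emph{some} actual configuration with many triples were the adversary clever — but we are claiming the opposite direction (no set does better than $g$), so a short interval is safe and no largeness of $I$ beyond $n/8$ is needed; the density $n^2/64$ is immediate. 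The one thing to be careful about is the exact threshold: we want ``every subset of size $g(k)-1$ spans $<k$'', and this is immediate from $g(k)=\min_{|P|=k}g(P)$ only after verifying the correspondence $T\mapsto P$ with $g(P)\le|T|$ is valid, i.e. that the three coordinate-projections of $P$ each inject into $T$. So I expect the proof to be short, with all difficulty concentrated in (i) the clean statement and injectivity of the homothety-pullback, and (ii) the no-wraparound observation for the extremal construction.
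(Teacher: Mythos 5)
Your upper-bound argument is essentially identical to the paper's: embed a homothetic copy $s+tP^\ast$ of an extremal configuration $P^\ast$ for $g(k)$ into $C$ via Theorem~\ref{thm:multi_Szm} and read off the rows, columns and diagonals. The one point worth recording (which the paper does) is that reduction mod $n$ can only merge elements of the diagonal set, which is harmless for an upper bound; your remark that the counts are ``preserved exactly'' is true in $\bZ^2$ and the possible shrinkage in $\bZ_n$ only helps. That half is fine.

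The lower bound as you settle on it has a genuine gap. Taking $S=\{(a,b,a+b)\colon a,b\in I\}$ for a \emph{single} interval $I$ does not give the inequality $g(P)\le |T|$: the vertex set spanned by $k$ triples corresponding to a point set $P$ is $\pi_1(P)\cup\pi_2(P)\cup\pi_3(P)$, and since $\pi_1(P)$ and $\pi_2(P)$ both live inside $I$ they may overlap. It is not enough that each projection injects into $T$ (your stated verification condition); the three images must be pairwise \emph{disjoint} for $|T|$ to dominate the sum $|\pi_1(P)|+|\pi_2(P)|+|\pi_3(P)|=g(P)$. This is not a technicality: for a configuration $P$ symmetric about the line $x=y$ (e.g.\ the hexagonal extremal sets from the Appendix, suitably centred) one has $\pi_1(P)=\pi_2(P)$, so your construction admits sets of size roughly $g(k)-\sqrt{k/3}$ spanning $k$ triples, far below the claimed threshold of $g(k)-1$. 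The fix is exactly the option you mention and then discard: take two intervals, as the paper does, $A=[n/8,2n/8-1]$ and $B=[2n/8,3n/8-1]$, so that $A$, $B$ and $A+B$ are pairwise disjoint; then rows, columns and diagonals of $P$ occupy disjoint parts of $T$ and $|T|\ge g(P)\ge g(k)$ follows. (Your parenthetical ``Sidon-type set'' alternative is also unavailable: a Sidon set in $\bZ_n$ has size $O(\sqrt{n})$, not $n/8$.)
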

\begin{proof}
Let $S$ be a given set of triples, and let $C \subseteq [n]^2$ be the set of points containing all $(a, b)$ such that $(a, b, a + b) \in S$. In other words, each point in $C$ corresponds to a triple from $S$. Next, let $P \subseteq [m]^2$ be a set of $k$ points such that $g(P) = g(k)$, where $m \in \mathbb{N}$, clearly, depends only on $k$. By Theorem \ref{thm:multi_Szm}, $C$ contains a homothetic copy of $P$, that is there exist some $s \in [n]^2$ and an integer $t$ such that $s + t P \subseteq C$. We claim that the $k$ edges corresponding to points in $s + tP$ span at most $g(k)$ elements of $\bZ_n$. This easily follows from the observation that $g(s + tP) = g(P) = g(k)$ and the number of different elements $a + b$ where $(a,b) \in s + tP$ is at most the size of the set
$$
	Q=\{z \colon (a, b) \in s + tP, a + b = z\}
$$
where the addition is done in $\bZ$ instead of $\bZ_n$ (hence, $|Q \mod n|$ cam be strictly smaller than $|Q|$).

Let us now exhibit a set of triples $S$ which shows the optimality of $g(k)$. Consider two intervals in $\mathbb{Z}_n$: $A=[n/8, 2n/8 - 1]$ and $B=[2n/8, 3n/8 - 1]$. Then $A+B=[3n/8, 5n/8 - 2]$ and, in particular, $A$,$B$ and $A+B$ are disjoint. Let $S$ the set of all triples $(a, b, a + b)$  where $a\in A$ and $b\in B$, and note that $|S|=n^2/64$. Identify each triple $(a, b, a + b) \in S$ with a point $(a,b) \in A\times B \subseteq \mathbb{Z}^2$. Then for a set of $k$ triples in $S$, corresponding to a set $P$ of $k$ points in $\mathbb{Z}^2$, the involved vertices in $A$, $B$ and $A+B$ correspond to rows, columns and diagonals occupied by $P$, respectively, owing to disjointness of these sets. Hence, any set of $k$ edges necessarily span at least $g(k)$ elements.
\end{proof}

The following theorem determines the growth rate of $g(k)$ and, by Lemma \ref{lemma:Zn}, tight bounds on the size of a smallest set which spans $k$ edges in additive triples coming from $\Gamma = \bZ_n$.

\begin{thm} \label{thm:gk}
	$g(k) = (1 + o(1)) \sqrt{12 k}$.
\end{thm}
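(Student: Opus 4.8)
The plan is to prove matching upper and lower bounds on $g(k)$. The quantity $g(P)$ counts the occupied rows, columns and diagonals of a point set $P \subseteq \mathbb{Z}^2$, which is naturally a combinatorial $\frac32$-dimensional object: in fact, if we embed $\mathbb{Z}^2$ into the triangular lattice $T$ by taking the three natural families of parallel lines to be rows, columns and diagonals $x+y=c$, then $g(P)$ is exactly the number of lattice lines meeting $P$ (in all three directions). So the theorem amounts to an isoperimetric statement: among all $k$-point subsets of $T$, a regular hexagon (equivalently, in $\mathbb{Z}^2$ a set obtained from the optimal nested family $\mathcal S$ of Theorem~\ref{thm:harper}, but projected to line-counts rather than edge-counts) minimizes the number of occupied lines, and that minimum is $(1+o(1))\sqrt{12k}$.

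For the \textbf{upper bound} $g(k) \le (1+o(1))\sqrt{12k}$, I would exhibit an explicit near-optimal configuration: a regular hexagonal region of the triangular lattice. Concretely, take $P$ to be (a subset of) a hexagon $H_r$ of ``radius'' $r$, i.e.\ all $(x,y) \in \mathbb{Z}^2$ with $|x| \le r$, $|y| \le r$, $|x+y| \le r$. A direct count gives $|H_r| = 3r^2 + 3r + 1$ and $g(H_r) = 3(2r+1) = 6r + 3$ (the rows are $x = -r, \ldots, r$, and likewise for the other two directions), so $g(H_r) = (1+o(1)) 6r$ while $|H_r| = (1+o(1)) 3r^2$, giving $g(H_r) = (1+o(1))\sqrt{12\,|H_r|}$. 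For general $k$ one takes the largest hexagon with $\le k$ points and adds the remaining $O(\sqrt k)$ points along one more line in each direction; this changes $g$ only by $O(1)$, so $g(k) \le (1+o(1))\sqrt{12k}$.

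For the \textbf{lower bound} $g(k) \ge (1-o(1))\sqrt{12k}$, the point is that if $P$ occupies $a$ rows, $b$ columns and $c$ diagonals, then $P$ is contained in the intersection of these lines, so $k = |P| \le$ (number of lattice points lying on some chosen row, column \emph{and} diagonal) and I must bound this triple intersection. The key inequality is that a set contained in $a$ given horizontal lines, $b$ given vertical lines and $c$ given diagonals has at most roughly $\tfrac{1}{3}(a+b+c)^2 / 4 \cdot \text{const}$ points; more precisely one shows $k \le \tfrac{(a+b+c)^2}{12}(1+o(1))$, which rearranges to $a + b + c \ge (1-o(1))\sqrt{12k}$. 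To prove this, fix the $a$ values $x \in X$, the $b$ values $y \in Y$ and the $c$ values $z \in Z$; then $|P| \le |\{(x,y) : x\in X,\, y \in Y,\, x+y \in Z\}|$, and I would bound the number of such triples. One clean way: by Cauchy--Schwarz or a direct convexity argument, $|\{(x,y)\in X\times Y : x+y \in Z\}| = \sum_{z \in Z} r_{X,Y}(z) \le \sum_{z} \min(|X|,|Y|, \ldots)$; the sharp bound $\le \tfrac{1}{12}(a+b+c)^2$ comes from optimizing, which (after symmetrizing in the three parameters, since rows/columns/diagonals play symmetric roles via the substitution $(x,y)\mapsto(x, -x-y)$ etc.) forces $a \approx b \approx c$ and the extremal $X, Y, Z$ to be intervals — exactly the hexagon.

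The \textbf{main obstacle} is this last sharp triple-intersection estimate: the naive bound $|P| \le \min(ab, \ldots)$ only gives $a+b+c \gtrsim 2\sqrt k$, not $\sqrt{12k} \approx 3.46\sqrt k$, so one genuinely needs the three-directional structure and cannot treat the directions independently. I expect the cleanest route is a compression/symmetrization argument in the spirit of Harper's proof — showing that pushing $X$, $Y$, $Z$ toward intervals can only decrease $a+b+c$ for fixed $|P|$ — or alternatively a generating-function / convexity computation that the number of representations is maximized when all three sets are intervals of nearly equal length. Once the extremal configuration is pinned down to be (asymptotically) the hexagon, the constant $\sqrt{12}$ drops out of the elementary area-versus-boundary computation already done in the upper bound, and as a bonus the same compression argument recovers Harper's Theorem~\ref{thm:harper} with the edge-boundary playing the role of the line-count.
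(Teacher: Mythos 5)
Your overall strategy coincides with the paper's: view $g$ through its inverse (the maximum number of points coverable by $m$ lines in the three directions), identify the hexagon as the extremal configuration, and extract the constant $\sqrt{12}$ from the area-versus-line-count computation. Your upper bound is complete and correct: the hexagon $H_r$ with $|H_r|=3r^2+3r+1$ and $g(H_r)=6r+3$ is exactly the extremal example the paper arrives at (the $2a+1$ longest diagonals of a square grid), and the rounding argument for general $k$ is routine.

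The lower bound, however, has a genuine gap, and it sits precisely where you say the ``main obstacle'' is. The entire technical content of the theorem is the claim that, for fixed $|X|=a$, $|Y|=b$, $|Z|=c$, the quantity $|\{(x,y)\in X\times Y: x+y\in Z\}|$ is maximized when $X$, $Y$, $Z$ are intervals; you assert that a ``compression/symmetrization argument'' or a ``generating-function / convexity computation'' should give this, but you do not supply one, and this step is not routine. The difficulty is that the order-isomorphism sending $X$ to $[a]$ and $Y$ to $[b]$ does not send diagonals to diagonals, so points of $P$ lying on a common diagonal of $X\times Y$ can scatter across many diagonals of $[a]\times[b]$; a naive compression therefore does not obviously preserve (let alone improve) the count. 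The paper's Lemma~\ref{lem:equidist} supplies exactly the missing ingredient by an induction on the number of diagonals $\ell$: it exhibits a diagonal of $A\times B$ whose intersection with $P$ is no larger than the corresponding unused diagonal of $[a]\times[b]$ appended to an extremal configuration for $\ell-1$ diagonals, and the key counting step (the Claim inside that proof, bounding $|P_1|$ by $\min\{i-1,b-j\}+\min\{a-i,j-1\}+1$) is where the work happens. Your claim is in fact true and admits a clean alternative proof via the discrete Riesz rearrangement inequality $\sum f(x)g(y)h(x+y)\le \sum f^{*}(x)g^{*}(y)h^{*}(x+y)$ applied to indicator functions, which would make your route legitimately different from (and arguably slicker than) the paper's; but as written the proposal contains neither that reference nor a proof. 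The remaining optimization forcing $a\approx b\approx c$ (the paper's Claim~\ref{cl:makefatter}) is, as you suggest, an elementary term-by-term comparison once the interval reduction is in place.
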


We prove Theorem \ref{thm:gk} by considering a dual problem: given an integer $m$, what is the size of a largest set of points $P \subseteq \bZ^2$ such that $g(P) = m$?
Note that $g$ and $h$ are inverse functions, in the sense that if $g(k)=m$ and $g(k')=m+1$ then $k\leq h(m)<k'$.

To determine the growth rate of $h$, consider some fixed sets $A,B\subseteq \mathbb{Z}$, with $|A|=a, |B|=b$, and let $h(A,B,\ell)$ be the largest number of points in $A\times B$ occupying at most $\ell$ diagonals. The following lemma, which is the heart of the proof of Theorem \ref{thm:gk}, shows that we can assume $A$ and $B$ to be intervals. 

\begin{lemma}\label{lem:equidist}
$h(A,B,\ell)\leq h([a],[b],\ell)$.
\end{lemma}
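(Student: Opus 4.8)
The plan is to reduce to the interval case by a compression (shifting) argument, replacing $A$ and $B$ one at a time by initial segments of $\mathbb{Z}$ without decreasing the maximum number of points that fit on $\ell$ diagonals. Fix an optimal configuration $P \subseteq A \times B$ with $|P| = h(A,B,\ell)$ occupying at most $\ell$ diagonals $x+y = c_1 < c_2 < \dots < c_\ell$. The key observation is that the constraint "lie on one of $\ell$ prescribed diagonals" is itself translation-invariant in a useful way: if we shift the entire $x$-axis, i.e. replace $A$ by $A + s$ and correspondingly shift each point's first coordinate, the set of diagonals just translates, so the count $h(A,B,\ell)$ is unchanged under translating $A$ or $B$. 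The real content is therefore not translation but \emph{compression} of the sets $A$, $B$ to intervals of the same size.

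First I would set up the compression on $B$ (the argument for $A$ is symmetric, and one application of each suffices if done in the right order, or one iterates). Write $A = \{a_1 < \dots < a_a\}$ and think of $P$ as, for each column index $j$ (i.e. each element $b \in B$), a set $P_b \subseteq A$; the diagonal occupied by $(a,b)$ is determined by $a+b$. The natural move is a "down-compression": take the two consecutive elements $b < b'$ of $B$ that are \emph{not} consecutive integers (if none exist, $B$ is already an interval, up to translation) and slide $b'$ and everything above it down by one, i.e. decrease the relevant $b$-values. One must check that (i) the new point set still lies in $A \times B''$ for the new set $B''$ which is "more interval-like", and (ii) the number of occupied diagonals does not increase. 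For (ii): sliding a block of columns down by $1$ shifts the diagonal-sums $a+b$ of exactly those points by $-1$; the set of all occupied diagonal-values is thus modified by translating a sub-multiset by $-1$, and a short combinatorial argument (a "diagonals merge rather than split" observation, exactly as in standard edge-isoperimetric compressions on the grid) shows the total number of distinct values can only stay the same or drop. Since the point count is preserved, we get $h(A,B,\ell) \le h(A,B'',\ell)$ with $B''$ strictly closer to an interval; iterating terminates at $B = [b]$ (after a harmless translation), and then the same procedure on $A$ finishes the proof.

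The step I expect to be the main obstacle is (ii): verifying that the compression move genuinely does not increase the number of occupied diagonals. Naively, shifting a block of points can move them onto diagonals that were previously empty, so one needs to be careful about \emph{which} block one shifts and in \emph{which} direction — the correct choice is to always compress "towards" the bulk of the diagonal-sums, and one may need to choose, among several non-consecutive gaps in $B$, the one adjacent to the appropriate side. An alternative, cleaner route that sidesteps picking the direction is to prove the inequality directly: given $P \subseteq A\times B$ on $\ell$ diagonals, construct an explicit injection into a configuration in $[a]\times[b]$ on at most $\ell$ diagonals by sorting — replace the $i$-th smallest element of $A$ by $i$ and the $j$-th smallest element of $B$ by $j$, and argue that this order-preserving relabeling cannot increase the number of distinct values of "row-index $+$ column-index" among the points of $P$. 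This monotonicity of the number of occupied anti-diagonals under coordinate-wise sorting is the crux, and I would aim to prove it by an exchange/induction argument on the number of "inversions" between the actual positions and the sorted positions, handling one transposition of consecutive elements at a time.
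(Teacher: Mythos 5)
Both routes you propose rest on a monotonicity claim that is false, and the difficulty you yourself flag as ``the main obstacle'' is fatal rather than technical. For the compression route, the intermediate inequality $h(A,B,\ell)\le h(A,B'',\ell)$ already fails: take $A=B=\{1,3\}$ and $\ell=1$. The diagonal $x+y=4$ contains the two points $(1,3)$ and $(3,1)$, so $h(A,B,1)=2$; but after closing the gap in $B$ (in either direction) the four sums in $A\times B''$ become $2,3,4,5$, all distinct, so $h(A,B'',1)=1$. Since every single-coordinate compression step applied to this configuration strictly decreases $h$, while the target value is $h([2],[2],1)=2$, no choice of gap, direction, or ordering of the steps can yield a monotone chain of inequalities: the value must go down and come back up, so the two coordinates cannot be compressed one at a time. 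Your ``cleaner'' alternative fails for the same underlying reason: the order-preserving relabeling $(x_i,y_j)\mapsto(i,j)$ can \emph{increase} the number of occupied diagonals. For instance, with $A=\{0,10\}$, $B=\{0,1,10\}$ and $P=\{(0,10),(10,0)\}$, the set $P$ lies on the single diagonal $x+y=10$, but its relabeled image $\{(1,3),(2,1)\}$ has sums $4$ and $3$ and occupies two diagonals. (The lemma itself is of course still true in these examples; it is only the proposed mechanism that breaks.)

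The paper's proof avoids any such pointwise monotonicity claim. It inducts on $\ell$: given $P=C\cap(A\times B)$ with $|C|=\ell$ and assumed too large, it takes an optimal $(\ell-1)$-diagonal configuration $P'$ in $[a]\times[b]$, uses the pigeonhole bound $|P|>|P'|$ to find a relabeled point $(i,j)$ of $P$ outside $P'$, shows that the points of $P$ on the diagonal through $(x_i,y_j)$ number at most the full trace $T$ of the diagonal through $(i,j)$ in $[a]\times[b]$ (a short counting argument using only the orderings of $A$ and $B$), and bounds the remaining points of $P$ by the induction hypothesis; since $T$ is disjoint from $P'$, the union $P'\cup T$ is a configuration of $\ell$ diagonals in $[a]\times[b]$ with at least $|P|$ points, giving the contradiction. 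If you want to salvage a compression-flavoured argument, you would need to compare the sorted sequences of diagonal-trace sizes of $A\times B$ and of $[a]\times[b]$ globally, rather than tracking a fixed point set through local moves.
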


\begin{proof}
We apply induction on $\ell$; for $\ell=0$ there is nothing to prove. Suppose that the statement holds for $\ell-1$, for some $\ell \ge 1$. Let $C$ be a set of $\ell$ diagonals (recall that we only consider diagonals of the form $x + y = z$, for some integer $z$) and suppose, towards a contradiction, that $P = C \cap (A\times B)$ satisfies $|P| > h([a],[b],\ell)$. 

Let $A = \{x_1, \ldots, x_a\}$ and $B = \{y_1, \ldots, y_b\}$ such that $x_i < x_{i+1}$ and $y_i < y_{i+1}$. For each point $(x_i, y_j) \in A \times B$ consider the point $(i, j) \in [a] \times [b]$, and let
$$
D=\{(i,j): (x_i,y_j) \in P\} \subseteq [a]\times [b]
$$
be the set of all such points. Let $P' \subseteq [a] \times [b]$ be a set of points which is a certificate for $h([a], [b], \ell - 1)$, and recall that $P'$ is a union of $\ell - 1$ diagonals intersecting $[a] \times [b]$. In particular, if $(i, j) \notin P'$ then $(x, y) \notin P'$ for every $x \in [a]$ and $y \in [b]$ such that $x + y = i + j$. Note that
$$
	|D| = |P| > h([a],[b],\ell) \geq h([a],[b],\ell-1) = |P'|,
$$ 
thus there exists some $(i,j) \in D \setminus P'$. Let $P_1=\{(x,y)\in A \times B \colon x + y = x_i + y_j\} \subseteq P$ and $P_2 = P\setminus P_1$. In other words, $P_1$ consists of all the points of $P$ which lie on the same diagonal as $(x_i, y_j)$, and $P_2$ are all the points which remain after removing this diagonal.

\begin{claim}
 $$
 	|P_1| \leq |\{(x,y)\in [a]\times [b]: x + y = i + j\}|=:|T|.
 $$ 
\end{claim}
\begin{proof}
Let
$$
	D_1=\{(u,w):(x_u,y_w) \in P_1\}.
$$
Note that $(i, j) \in D_1\cap T$ and define $D_1^-:=\{(x,y)\in D_1: x < x_i\}$. For all $x,y$ with $x+y=x_i+y_j$ we have at most one such $y$ for every $x$ and vice versa, and if $x<x_i$ then $y>y_j$. Since there are at most $i-1$ values $x<x_i$ and at most $b-j$ values $y<y_j$, we get
$$|D_1^-|\leq \min\{i - 1, b - j\}= |T\cap ([i-1]\times [b])|.
$$
Analogously, for $D_2^+:=\{(x,y)\in D_1: x > x_i\}$ we obtain
$$|D_2^-|\leq |T\cap ([i+1]\times [b])|,
$$
and the claim follows by addition.
\end{proof}

By the induction hypothesis we have
$$ 
	|P_2| \leq h(A,B,\ell-1) \leq h([a],[b],\ell-1)=|P'|,
$$
and since $P' \cap T = \emptyset$, by an earlier observation, we obtain 
$$ 
	h([a],[b],\ell) \geq |P'| + |T| \geq |P_2|+|P_1|=|P| > h([a],[b],\ell),
$$
thus a contradiction. This proves the induction step, and the statement follows.
\end{proof}

The previous lemma reduces the problem of estimating $h(m)$ to finding integers $a, b, \ell$, such that $a + b + \ell = m$, which maximize $h([a], [b], \ell)$. 

\begin{proof}[Proof of Theorem \ref{thm:gk}]
To prove the theorem it suffices to determine the growth rate of $h(m)$.
By Lemma \ref{lem:equidist}, for any integer $m$ we have that 
$$
	h(m) = \max\left\{ h([a], [b], \ell) \colon a, b, \ell \in \mathbb{N} \text{ such that } a + b + \ell = m \right\}.
$$
For brevity, we write $h([a], [b], \ell) =: h(a, b, \ell)$.

\begin{claim}\label{cl:makefatter}
$h(m)$ is realised by $h(a,b,\ell)$, where $\lfloor m/3 \rfloor \leq a,b,\ell \leq \lceil m/3 \rceil$.
\end{claim}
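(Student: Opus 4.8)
The plan is to show that the maximum defining $h(m)$ is attained at a triple $(a,b,\ell)$ whose three coordinates are as equal as possible, via a local exchange (smoothing) argument: starting from any optimal triple, if two of the coordinates differ by at least $2$, one can decrease the larger and increase the smaller without decreasing $h(a,b,\ell)$, and iterating this drives us to the balanced configuration.

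First I would record the two monotonicity facts that make such exchanges possible. Clearly $h(a,b,\ell)$ is non-decreasing in each of $a$, $b$ and $\ell$ separately (enlarging an interval or allowing an extra diagonal can only add points). The real content is a \emph{trade} inequality: if $a \ge b+2$, then replacing $(a,b,\ell)$ by $(a-1,b+1,\ell)$ does not decrease $h$; and symmetrically for a trade between $\{a,b\}$ and $\ell$. For the $a \leftrightarrow b$ trade I would take an optimal configuration $P \subseteq [a]\times[b]$ occupying $\ell$ diagonals and exhibit an injection into a configuration in $[a-1]\times[b+1]$ occupying the same $\ell$ diagonals: since each diagonal $x+y=z$ meets $[a]\times[b]$ in an interval of at most $\min\{a,b,\dots\}$ points and meets $[a-1]\times[b+1]$ in an interval that is at least as long whenever $a \ge b+2$ (the narrower side is what bounds the intersection length, and shrinking the longer side by one while growing the shorter side by one does not shrink the min), the whole configuration can be carried over diagonal by diagonal. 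For the $\{a,b\} \leftrightarrow \ell$ trade, say $a \ge \ell + 2$, I would argue that passing from $(a,b,\ell)$ to $(a-1,b,\ell+1)$ cannot hurt: intuitively an extra diagonal is worth at least as much as the bottom row of the $a$-interval once $a$ is large relative to $\ell$, because the $\ell$ diagonals already present can contain at most $\ell\cdot b$ points total while the $a$-direction is underused. This is most cleanly done again by building an explicit injection, or by invoking the interval description of $h([a],[b],\ell)$ (a union of $\ell$ consecutive diagonals through a rectangle is itself an optimal choice, which one should check).

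With the trade inequalities in hand, the argument concludes quickly: take $(a,b,\ell)$ realising $h(m)$ and, among all such optimal triples, one minimising $a^2+b^2+\ell^2$. If some two coordinates differed by $\ge 2$, the corresponding trade would produce another optimal triple with strictly smaller sum of squares, a contradiction; hence all pairwise differences are $\le 1$, which forces $\lfloor m/3\rfloor \le a,b,\ell \le \lceil m/3\rceil$.

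The main obstacle I anticipate is the $\{a,b\} \leftrightarrow \ell$ trade: unlike the symmetric row/column swap, it compares an interval in a coordinate direction against an extra diagonal, so the injection is less transparent and one must be careful about the regime where $a$ is only moderately larger than $\ell$. I expect the cleanest route is to first establish that $h([a],[b],\ell)$ is achieved by a \emph{contiguous block of $\ell$ diagonals} centred in the rectangle (an easy compression/shifting argument, since moving a diagonal toward the long axis of the rectangle only increases its intersection with $[a]\times[b]$), after which $h([a],[b],\ell)$ becomes an explicit piecewise-quadratic expression in $a,b,\ell$ and the required inequality $h(a-1,b,\ell+1) \ge h(a,b,\ell)$ reduces to a short calculation. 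A minor additional point to handle is the boundary parities (the $\lfloor\cdot\rfloor$ versus $\lceil\cdot\rceil$), but that is routine once the differences are known to be at most one.
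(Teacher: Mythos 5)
Your proposal is correct and follows essentially the same smoothing argument as the paper: both rest on the observation that $h([a],[b],\ell)$ is the sum of the $\ell$ largest diagonal-intersection lengths of the rectangle, and then perform balancing trades between the three parameters until they are within $1$ of each other. The only thing worth adding is that your anticipated ``main obstacle'' (the $\{a,b\}\leftrightarrow\ell$ trade) dissolves once you notice that $h(a,b,\ell)$ is invariant under all permutations of its three arguments --- rows, columns and diagonals play interchangeable roles in the triangular lattice --- so this trade is just the $a\leftrightarrow b$ trade in disguise; this is precisely the parenthetical remark in the paper's proof.
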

\begin{proof}
Let $d_i(a,b)$ be the size of the $i$-th largest intersection of a diagonal with $[a]\times [b]$. Then, for $a\leq b$, we have 
$$d_1=\dots =d_{b-a+1}=a, d_{b-a+2}=d_{b-a+3}=a-1, \dots , d_{a+b-2}=d_{a+b-1}=1.$$
A term by term comparison of $h(a,b,\ell)=\sum_{i=1}^\ell d_i(a,b)$ and $h(a+1,b-1,\ell)=\sum_{i=1}^\ell d_i(a+1,b-1)$ shows that to achieve $h(m)$ we must have $|b-a|\leq 1$. Similarly, by comparing  
$h(a,a,\ell)$ with $h(a+1,a+1,\ell-2)$ and $h(a-1,a-1,\ell+2)$, and $h(a,a+1,\ell)$ with $h(a+1,a+2,\ell-2)$ and $h(a-1,a,\ell+2)$   
(alternatively, it is not difficult to see that $h(a,b,c)=h(a,c,b)=h(b,c,a)$ always holds) we obtain that $h(m)$ is realised when $a,b,c$ are within $1$ of each other. We omit the straightforward calculations.
\end{proof}
By the above discussion
$$
h(a,a,a)=a+2(a-1)+\dots+2(a/2)+o(a^2) = 2\binom a 2 - 2\binom {a/2}{2}+o(a^2) =(3/4)a^2+o(a^2).
$$
Therefore
$$h(m)=\frac{3}{4}(m/3)^2+o(m^2)=\frac{m^2}{12}+o(m^2).
$$
Inverting the function yields $g(k)=(1+o(1))\sqrt{12k}$, completing the proof of Theorem~\ref{thm:gk}.
\end{proof}

Note that as a corollary of Theorem~\ref{thm:gk} we immediately obtain an asymptotic version of Harper's theorem (Theorem~\ref{thm:harper}). Too see this, observe that the triangular lattice $T$ is isomorphic to the square lattice with all the diagonals $x+y=c$ `drawn in' (formally: the Cayley graph on $\bZ^2$ generated by $(1,0)$, $(0,1)$ and $(1,-1)$), where in the latter the edge-boundary of a set $P$ satisfies 
\begin{equation}\label{eq:lattices}
\partial_e(P) \geq 2g(P)\geq 2g(|P|).  
\end{equation}
Thus, by Theorem~\ref{thm:gk} we obtain $\partial_e(P)\geq 4(1+o(1))\sqrt{3}|P|$, which asymptotically matches the edge-boundary of the regular hexagons. 

To derive Theorem~\ref{thm:harper} in full from here, note that in the course of the proof we determine $g(k)$ via $h(m)$ precisely. Since the extremal sets claimed in Theorem~\ref{thm:harper} are also extremal sets for $g(k)$ (as the corresponding unions of rows, columns and diagonals are extremal for $h(m)$), Theorem~\ref{thm:harper} follows. 

Finally, note that Harper's theorem does not claim a complete classification of extremal sets for the edge-isoperimetric problem on $T$. In fact, for most values for $k$ it is easy to see that even up to isometry there is more than one extremal example. That said, the extremal examples \emph{are} unique for values of $k$ that are volumes of balls in $T$. This can be deduced from our argument as follows. If $P$ is extremal, by~\eqref{eq:lattices} it has to have no `gaps' (the intersection with each of the three axes has to be an interval), and be extremal for $g(k)$. However, the regular hexagon of radius $a$ in $T$ corresponds in $T'$ to the union of the $2a+1$ longest diagonals in $[2a+1]\times [2a+1]$, which, by a uniqueness analysis in Claim~\ref{cl:makefatter}, is the unique up to dilation extremal set for $h(3(2a+1))$ and therefore the unique gap-free extremal set for $g(k)$.
\end{document}